\renewcommand{\phi}{\varphi}
\newcommand{\bbF}{\mathbb{F}}
\newcommand{\rmA}{\mathrm{A}}
\newcommand{\rmH}{\mathrm{H}}
\newcommand{\rmM}{\mathrm{M}}
\newcommand{\rmP}{\mathrm{P}}
\newcommand{\rmS}{\mathrm{S}}
\DeclareMathOperator{\GL}{GL}
\newtheorem{theorem}{Theorem}[section]
\newtheorem{theorem*}{Theorem}
\newtheorem{proposition}[theorem]{Proposition}
\newtheorem{corollary}[theorem]{Corollary}
\theoremstyle{definition}
\title{The third Milgram--Priddy class lifts}
\author{Markus Szymik}
\date{\mydate\today}
\begin{document}

\maketitle








\section{Introduction}

Quillen has shown that the cohomology of the finite general linear groups with trivial coefficients in their defining characteristic vanishes in the stable range: when the cohomological degree is small with respect to the rank of the matrices~\cite{Quillen}. Specifically, it is known from Maazen's thesis~\cite{Maazen} that~$\rmH^d(\GL_{r}(\bbF_2);\bbF_2)$ is trivial in the range~\hbox{$0<d<r/2$}. Quillen's~(unpublished) stability arguments had to exclude the field~$\bbF_2$, as does the recent extension of his work by Sprehn and Wahl~\cite{Sprehn+Wahl}. Milgram--Priddy~\cite{Milgram+Priddy} and Lahtinen--Sprehn~\cite{Lahtinen+Sprehn} described non-zero classes far away from this bound. The question remained for a while: what happens at the~`edge'~\cite{Hepworth} of this region, for~$\rmH^n(\GL_{2n}(\bbF_2);\bbF_2)$? This has now been answered almost completely by Galatius, Kupers, and Randal-Williams~\cite[Thm.~B]{GKR-W}, who extended the stable range substantially, showing the vanishing of the relative homology~$\rmH_d(\GL_r(\bbF_2),\GL_{r-1}(\bbF_2);\bbF_2)=0$ for~\hbox{$d<2(r-1)/3$}. It follows that~$\rmH^n(\GL_{2n}(\bbF_2);\bbF_2)=0$ for all~$n\geqslant4$. In contrast, the non-triviality of~$\rmH^1(\GL_2(\bbF_2);\bbF_2)$ and~$\rmH^2(\GL_4(\bbF_2);\bbF_2)$ is well-known, because of the isomorphisms~$\GL_2(\bbF_2)\cong\rmS_3$ and~$\GL_4(\bbF_2)\cong\rmA_8$, whence the characteristic classes of the sign and the spin representations give non-zero representatives for them. The purpose of the present text is to resolve the remaining case. 

\begin{theorem}\label{thm:main}
We have~$\rmH^3(\GL_6(\bbF_2);\bbF_2)\not=0$.
\end{theorem}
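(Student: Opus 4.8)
The plan is to produce a non-zero class directly, by lifting the Milgram--Priddy class from a small subgroup. Fix inside $\GL_6(\bbF_2)$ the parabolic subgroup $\rmP$ stabilising a $3$-dimensional subspace $W\subseteq\bbF_2^6$; its unipotent radical is the elementary abelian group $\calV=\mathrm{Hom}(\bbF_2^6/W,W)\cong\rmM_3(\bbF_2)$ of rank~$9$, normal in $\rmP$ with Levi quotient $\GL(W)\times\GL(\bbF_2^6/W)\cong\GL_3(\bbF_2)\times\GL_3(\bbF_2)$, and the extension $\rmP\cong\calV\rtimes\bigl(\GL_3(\bbF_2)\times\GL_3(\bbF_2)\bigr)$ splits. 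The determinant is a cubic form $\det\colon\rmM_3(\bbF_2)\to\bbF_2$, and $\det(aXb)=\det(a)\det(X)\det(b)=\det(X)$ for invertible $a,b$ because $\bbF_2^{\times}$ is trivial, so $\det$ represents a non-zero class $\mu\in\rmH^3(\calV;\bbF_2)^{\GL_3(\bbF_2)\times\GL_3(\bbF_2)}$ --- the third Milgram--Priddy class, extending the sign class of $\GL_2(\bbF_2)\cong\rmS_3$ and the spin class of $\GL_4(\bbF_2)\cong\rmA_8$. Since the restriction map $\rmH^3(\GL_6(\bbF_2);\bbF_2)\to\rmH^3(\calV;\bbF_2)$ has image in this invariant subring, it suffices to show that $\mu$ lies in the image: every preimage is then automatically non-zero, and Theorem~\ref{thm:main} follows.

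I would lift $\mu$ in two steps. First, from $\calV$ to $\rmP$: in the Lyndon--Hochschild--Serre spectral sequence of $1\to\calV\to\rmP\to\GL_3(\bbF_2)\times\GL_3(\bbF_2)\to1$ the class $\mu$ sits at the bottom of the third column, $E_2^{0,3}$, and it is a permanent cycle as soon as the three possibly non-zero differentials on it vanish; these land in $\rmH^2\bigl(\GL_3(\bbF_2)^2;\rmH^2(\calV;\bbF_2)\bigr)$, $\rmH^3\bigl(\GL_3(\bbF_2)^2;\rmH^1(\calV;\bbF_2)\bigr)$ and $\rmH^4\bigl(\GL_3(\bbF_2)^2;\bbF_2\bigr)$, all finite-dimensional and accessible from the known mod~$2$ cohomology of $\GL_3(\bbF_2)$, the small low-degree cohomology of $\GL_3(\bbF_2)^2$, and the $\GL_3(\bbF_2)^2$-module structure of $\rmH^{\leqslant2}(\calV;\bbF_2)$, so this step amounts to checking that $\mu$ already lives on the parabolic; write $\widetilde\mu\in\rmH^3(\rmP;\bbF_2)$ for a lift. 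Second, from $\rmP$ to $\GL_6(\bbF_2)$: the index $[\GL_6(\bbF_2):\rmP]$ equals the Gaussian binomial coefficient $\binom{6}{3}_{2}=1395$, which is odd, so $\rmP$ contains a Sylow $2$-subgroup and $\rmH^{\ast}(\GL_6(\bbF_2);\bbF_2)$ is the ring of stable elements of $\rmH^{\ast}(\rmP;\bbF_2)$. Hence $\widetilde\mu$ extends to $\GL_6(\bbF_2)$ exactly when, for a set of representatives $g$ of the double cosets $\rmP\backslash\GL_6(\bbF_2)/\rmP$, the two restrictions of $\widetilde\mu$ to $\rmP\cap g\rmP g^{-1}$ --- the naive one and the one twisted by conjugation by $g$ --- agree. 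These cosets are classified by $\dim(W\cap gW)\in\{0,1,2,3\}$, so there are four conditions: $\dim=3$ is vacuous, and for $\dim=1,2$ the group $\rmP\cap g\rmP g^{-1}$ is a smaller parabolic whose Levi only involves $\GL_2(\bbF_2)$ and $\GL_1(\bbF_2)$, so the check reduces to low-rank cohomology, ultimately to the $\GL_2(\bbF_2)\cong\rmS_3$ sign class and its compatibilities.

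The decisive case is $\dim(W\cap gW)=0$: then $g$ may be taken to be the order-reversing permutation matrix, $\rmP\cap g\rmP g^{-1}$ is the Levi $\GL_3(\bbF_2)\times\GL_3(\bbF_2)$ itself (on which $\calV$ restricts trivially), and conjugation by $g$ acts on it as the interchange of the two factors, up to an inner automorphism. The compatibility condition therefore says precisely that $\widetilde\mu$ restricts to a class in $\rmH^3\bigl(\GL_3(\bbF_2)\times\GL_3(\bbF_2);\bbF_2\bigr)$ that is symmetric under the factor swap, and I expect this to be where the work lies. The restriction of $\widetilde\mu$ to the Levi is only determined once the indeterminacy by inflated classes left over from the first step is fixed, so one must choose $\widetilde\mu$ canonically --- which seems possible because $\det$ is invariant under $X\mapsto X^{t}$ and is compatible with duality, making the whole configuration symmetric under the transpose--inverse automorphism of $\GL_6(\bbF_2)$ that interchanges $\rmP$ with the opposite parabolic --- and then verify the required symmetry by an explicit calculation in $\rmH^3$ of $\GL_3(\bbF_2)^2$, and, to run the stable-element check cleanly, in $\rmH^3$ of the order-$2^{15}$ unitriangular Sylow subgroup. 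Once the lift is in hand its non-vanishing is free, since it already restricts non-trivially to $\calV$; one may also use as a consistency check that, by the Galatius--Kupers--Randal-Williams estimate, the restriction $\rmH^3(\GL_6(\bbF_2);\bbF_2)\to\rmH^3(\GL_5(\bbF_2);\bbF_2)$ is injective, so the produced class must survive to $\GL_5(\bbF_2)$.
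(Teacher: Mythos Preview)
Your route is entirely different from the paper's, and as written it is a plan rather than a proof. The paper does not try to lift the Milgram--Priddy class at all. It invokes Webb's alternating-sum formula $\dim\rmH^d(G)=\sum_{P\geqslant B}(-1)^{\mathrm{rank}(P)}\dim\rmH^d(P)$ over proper parabolics containing a fixed Borel, reduces it modulo~$2$ so that only the \emph{symmetric} ordered partitions of~$6$ survive, and then computes the seven numbers $\dim\rmH^3(\rmP(\lambda);\bbF_2)$ by machine (the largest group involved has order~$2^{15}\cdot 441$, well within reach). Their sum $47+28+16+5+24+17+6$ is odd, hence $\dim\rmH^3(\GL_6(\bbF_2);\bbF_2)$ is odd and in particular non-zero. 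That the Milgram--Priddy class lifts is then a \emph{corollary}, not the mechanism.

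Your stable-element strategy is sound in outline --- the index $1395$ is odd, and the $\rmP$-double cosets are indexed by $\dim(W\cap gW)$ as you say --- but none of the checkpoints is actually verified. You defer the three Lyndon--Hochschild--Serre differentials on $\mu$ to ``accessible'' computations you do not perform; for the stability conditions you write ``I expect this to be where the work lies'' and ``seems possible''. Two of the deferred points are more delicate than you suggest. First, for $\dim(W\cap gW)=1,2$ the intersection $\rmP\cap g\rmP g^{-1}$ stabilises both $W$ and $gW$, a diamond configuration rather than a flag, so it is not a parabolic of $\GL_6(\bbF_2)$ and your reduction to ``a smaller parabolic'' needs more care. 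Second, the transpose--inverse automorphism exchanges $\rmP$ with the \emph{opposite} parabolic rather than fixing $\rmP$, so it does not by itself single out a canonical $\widetilde\mu$ or force the swap-symmetry of its restriction to the Levi; the indeterminacy from $F^1\rmH^3(\rmP)$ must be resolved by an explicit computation you have not done. Completing your programme would require calculations in $\rmH^{\leqslant 4}$ of $\GL_3(\bbF_2)^2$ with twisted coefficients and in $\rmH^3$ of several intersections of parabolics --- work of the same order as the paper's machine computations, but less easily automated.
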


Hepworth's work~\cite[Thm.~G]{Hepworth} now implies~(as Galatius, Kupers, and Randal-Williams~\cite[Lem.~6.7]{GKR-W} explain again) that Theorem~\ref{thm:main} automatically has the following stronger consequence:

\begin{corollary}
We have~$\dim\rmH_3(\GL_6(\bbF_2);\bbF_2)=1$, and the non-trivial element is the third power of the non-zero class in~$\rmH_1(\GL_2(\bbF_2);\bbF_2)$ under block-sum multiplication. 
\end{corollary}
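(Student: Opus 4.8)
To prove the Corollary the one substantive input is Theorem~\ref{thm:main}: granting $\rmH^3(\GL_6(\bbF_2);\bbF_2)\neq0$, Hepworth's Theorem~G \cite[Thm.~G]{Hepworth}, in the form recalled in \cite[Lem.~6.7]{GKR-W}, confines $\rmH_3(\GL_6(\bbF_2);\bbF_2)$ to the at-most-one-dimensional span of the edge class $\sigma^3$, the third block-sum power of the generator $\sigma\in\rmH_1(\GL_2(\bbF_2);\bbF_2)$; since Theorem~\ref{thm:main} forces that span to be non-zero, $\dim\rmH_3(\GL_6(\bbF_2);\bbF_2)=1$, generated by $\sigma^3$. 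So what follows is a plan for Theorem~\ref{thm:main}, arranged to realise the non-zero class as the Kronecker dual of $\sigma^3$ along the way.

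First I would fix the maximal parabolic $P\leqslant\GL_6(\bbF_2)$ stabilising a three-dimensional subspace $W\leqslant\bbF_2^6$, an extension $1\to N\to P\to\GL(W)\times\GL(\bbF_2^6/W)\to1$ with unipotent radical $N=\mathrm{Hom}(\bbF_2^6/W,W)\cong\rmM_3(\bbF_2)\cong(\bbZ/2)^9$ and Levi $\GL_3(\bbF_2)\times\GL_3(\bbF_2)$. As the diagonal torus of $\GL_6(\bbF_2)$ is trivial, a Sylow $2$-subgroup is the unipotent radical of a Borel and lies in $P$; hence, the index $[\GL_6(\bbF_2):P]$ being odd, the transfer splits $\mathrm{res}\colon\rmH^*(\GL_6(\bbF_2);\bbF_2)\to\rmH^*(P;\bbF_2)$, which is therefore injective with image the ring of stable elements, and it suffices to produce a non-zero stable class in $\rmH^3(P;\bbF_2)$. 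In the Lyndon--Hochschild--Serre spectral sequence $E_2^{s,t}=\rmH^s(\GL_3(\bbF_2)\times\GL_3(\bbF_2);\rmH^t(N;\bbF_2))\Rightarrow\rmH^*(P;\bbF_2)$, where $\rmH^t(N;\bbF_2)=\mathrm{Sym}^t(N^\vee)$, the candidate is the determinant cubic $\delta\in\mathrm{Sym}^3(N^\vee)=\rmH^3(N;\bbF_2)$, the function $A\mapsto\det A$ on $\rmM_3(\bbF_2)$: it is invariant under $\GL_3(\bbF_2)\times\GL_3(\bbF_2)$ precisely because $\det$ is identically $1$ on $\GL_3(\bbF_2)$, so $\delta\in E_2^{0,3}$. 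This is the degree-three avatar of the Milgram--Priddy construction \cite{Milgram+Priddy}, one step past the classical fact that the determinant \emph{quadratic} form on $\rmM_2(\bbF_2)$ is the spin class of $\GL_4(\bbF_2)\cong\rmA_8$ responsible for $\rmH^2(\GL_4(\bbF_2);\bbF_2)\neq0$.

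The hard part will be to show that $\delta$ survives and lifts: that $\delta$ is a permanent cycle, and that some lift of it to $\rmH^3(P;\bbF_2)$ is a stable class $\xi$ with $\mathrm{res}_N\xi=\delta$. The potential obstructions are the differentials $d_2,d_3,d_4$ out of $E_r^{0,3}$, which target $\rmH^{\geqslant 2}(\GL_3(\bbF_2)\times\GL_3(\bbF_2);\rmH^{\leqslant 1}(N;\bbF_2))$, and the stability condition, which by Alperin's fusion theorem reduces to finitely many comparisons of parabolic restrictions of $\xi$ --- principally, matching $\xi$ with the analogous class on the opposite parabolic across their common Levi. Both require precise knowledge of $\rmH^{\leqslant 2}$ of $\GL_3(\bbF_2)$ with coefficients in the trivial module, the natural module, its dual, and small symmetric powers thereof --- here the Schur multiplier $\rmH^2(\GL_3(\bbF_2);\bbF_2)\cong\bbZ/2$ already enters --- and the needed facts are either classical or a finite machine check; one may equally transport the whole computation to the Sylow $2$-subgroup $N\rtimes(\mathrm{U}_3\times\mathrm{U}_3)=\mathrm{U}_6$, the full upper unitriangular group. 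I expect this is exactly the step that breaks down for $\GL_{2k}(\bbF_2)$ with $k\geqslant4$, consistently with the vanishing from \cite[Thm.~B]{GKR-W}.

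The detection is then routine, and it simultaneously identifies $\xi$. Restricting $\xi$ along the ``diagonal'' elementary abelian subgroup $(\bbZ/2)^3=\langle\,I+E_{11},\,I+E_{22},\,I+E_{33}\,\rangle\leqslant N$, with $E_{ij}$ the matrix units of the $W$-block, one gets $\mathrm{res}_{(\bbZ/2)^3}\xi=\mathrm{res}_{(\bbZ/2)^3}\delta=t_1t_2t_3$, the unique squarefree cubic monomial in $\rmH^3((\bbZ/2)^3;\bbF_2)=\bbF_2[t_1,t_2,t_3]$, since the determinant of a diagonal matrix is the product of its entries. A suitable permutation matrix conjugates this $(\bbZ/2)^3$, generator by generator, onto the block-diagonal subgroup of $\GL_6(\bbF_2)$ whose $i$-th factor is a Sylow $2$-subgroup of the $i$-th of the three standard $\GL_2(\bbF_2)$-blocks; and since $\bbZ/2\hookrightarrow\GL_2(\bbF_2)\cong\rmS_3$ is an $\bbF_2$-homology isomorphism, this yields $\langle\xi,\sigma^3\rangle=\langle t_1t_2t_3,\,\gamma\otimes\gamma\otimes\gamma\rangle=1$, where $\gamma\in\rmH_1(\bbZ/2;\bbF_2)$ is the image of $\sigma$ and $\gamma\otimes\gamma\otimes\gamma$ is the homology class dual to $t_1t_2t_3$. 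Hence $\xi\neq0$, so $\rmH^3(\GL_6(\bbF_2);\bbF_2)\neq0$ and $\sigma^3\neq0$; this is Theorem~\ref{thm:main}, and the Corollary follows as in the first paragraph.
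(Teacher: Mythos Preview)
Your deduction of the Corollary from Theorem~\ref{thm:main} via \cite[Thm.~G]{Hepworth} and \cite[Lem.~6.7]{GKR-W} is exactly how the paper argues; the content lies entirely in Theorem~\ref{thm:main}, and there your route diverges from the paper's and leaves a genuine gap.

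The paper does \emph{not} attempt to lift the Milgram--Priddy class through the parabolic $\rmP(3,3)$. Instead it uses Webb's formula (Proposition~\ref{prop:Webb}) to write $\dim\rmH^3(\GL_6(\bbF_2))$ as an alternating sum of $\dim\rmH^3(\rmP(\lambda))$ over the proper parabolics containing a Borel, reduces modulo~$2$ so that only the seven \emph{symmetric} ordered partitions of~$6$ survive (Proposition~\ref{prop:strategy}), and then computes those seven dimensions by machine (GAP/HAP, Proposition~\ref{prop:computations}). Their sum $47+28+16+5+24+17+6$ is odd, hence $\rmH^3(\GL_6(\bbF_2))\neq0$. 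No Lyndon--Hochschild--Serre spectral sequence is analysed and no stability condition is checked; the paper trades your single delicate analysis for seven blunt dimension counts on groups small enough to handle.

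Your proposal, by contrast, stops precisely at the point you label the ``hard part'': you do not verify that $\delta$ is a permanent cycle, nor that any lift is a stable element of $\rmH^3(P;\bbF_2)$. Asserting that the needed twisted cohomology of $\GL_3(\bbF_2)$ is ``either classical or a finite machine check'' is not a proof---this survival-and-stability question \emph{is} Priddy's problem, and the paper's contribution is to settle it indirectly rather than confront it. Your detection argument via the diagonal $(\bbZ/2)^3\subset N$ and the pairing $\langle t_1t_2t_3,\gamma^{\otimes3}\rangle=1$ is correct and pleasant once $\xi$ exists, but the existence of $\xi$ is the entire theorem.
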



As explained in~\cite[Sec.~6.3]{GKR-W} as well, Theorem~\ref{thm:main} also solves the remaining case of a problem posed by Priddy~\cite[Sec.~5]{Priddy}. A maximal elementary abelian $2$--subgroup~$M$~of the group $\GL_{2n}(\bbF_2)$ consists of the block matrices
\[
\begin{bmatrix}
\;1 & A\;\\
\;0 & 1\;
\end{bmatrix}
\]
with $A=(a_{ij})$ in the matrix ring~$\rmM_n(\bbF_2)\cong M$. Because the group is elementary abelian, we have an isomorphism~\hbox{$\rmH^*(M;\bbF_2)\cong\bbF_2[\,\alpha_{ij}\,]$} with the classes~$\alpha_{ij}$ in degree~$1$. The {\it Milgram--Priddy class}~(see~\cite{Milgram+Priddy}) is defined as~\hbox{$\det_n=\det(\alpha_{ij})$} and it lives in the invariants~$\rmH^n(M;\bbF_2)^W$, where~\hbox{$W\cong\GL_n(\bbF_2)\times\GL_n(\bbF_2)$} is the Weyl group of $M$ in~\hbox{$G=\GL_{2n}(\bbF_2)$}. Priddy posed the problem to decide whether or not we can lift this class along the restriction~\hbox{$\rmH^n(\GL_{2n}(\bbF_2);\bbF_2)\to\rmH^n(M;\bbF_2)^W$}.

\begin{corollary}
The third Milgram--Priddy class lifts.
\end{corollary}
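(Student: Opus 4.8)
The plan is to deduce this from Theorem~\ref{thm:main} together with the first Corollary above. Write $G=\GL_6(\bbF_2)$. Those two results identify $\rmH_3(G;\bbF_2)$ as a one-dimensional space spanned by the class $c^{[3]}$ obtained from the non-zero class $c\in\rmH_1(\GL_2(\bbF_2);\bbF_2)$ by the threefold block-sum product, and hence, dualising over the field $\bbF_2$, they identify $\rmH^3(G;\bbF_2)=\bbF_2\cdot x$, where $x$ is the unique class with $\langle x,c^{[3]}\rangle=1$. I claim that $x$ restricts to $\det_3$ under $\rmH^3(G;\bbF_2)\to\rmH^3(M;\bbF_2)^W$. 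Since this is a map of $\bbF_2$-vector spaces out of a one-dimensional space, it is then either zero or injective, and in either case $\det_3$ lies in its image, which is what is asserted.

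The first step is to detect $x$ on an elementary abelian subgroup. Let $U\cong\bbZ/2$ be the group of upper unitriangular matrices in $\GL_2(\bbF_2)$. Because $\GL_2(\bbF_2)\cong\rmS_3$ has abelianisation $\bbZ/2$ generated by the image of $U$, the inclusion $U\hookrightarrow\GL_2(\bbF_2)$ induces an isomorphism on $\rmH_1(-;\bbF_2)$; consequently $c^{[3]}$ is the image of the product of the three degree-one generators of $\rmH_3((\bbZ/2)^3;\bbF_2)$ along the block-diagonal inclusion $(\bbZ/2)^3=U\times U\times U\hookrightarrow\GL_2(\bbF_2)^{\times3}\hookrightarrow G$. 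Dualising, and writing $\rmH^*((\bbZ/2)^3;\bbF_2)=\bbF_2[t_1,t_2,t_3]$, the restriction of $x$ to $(\bbZ/2)^3$ has coefficient $1$ on the monomial $t_1t_2t_3$; in particular $x$ restricts non-trivially to $(\bbZ/2)^3$. (One can be more precise using the K\"unneth theorem: the component of $x|_{\GL_2(\bbF_2)^{\times3}}$ in $\rmH^1\otimes\rmH^1\otimes\rmH^1$ is forced to be the product of the three non-zero degree-one classes, since that is the only component pairing non-trivially with $c\otimes c\otimes c$; and as $\rmH^*(\GL_2(\bbF_2);\bbF_2)=\rmH^*(\rmS_3;\bbF_2)=\bbF_2[t]$ restricts isomorphically onto $\rmH^*(\bbZ/2;\bbF_2)$, the remaining components contribute only monomials $t_1^{i}t_2^{j}t_3^{k}$ with $(i,j,k)\neq(1,1,1)$.)

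Next I would align this subgroup with $M$. The block-diagonal $(\bbZ/2)^3\le G$ is generated by three commuting transvections, and a single permutation matrix conjugates it onto the subgroup $D\le M$ consisting of those matrices for which the block $A$ is diagonal. Replacing $M$ by this conjugate --- which changes neither the Weyl group $W$ nor the class $\det_n$ --- I may assume $(\bbZ/2)^3=D\subseteq M$, with each $\alpha_{ij}$ restricting on $D$ to $\delta_{ij}\,t_i$. Then $\det_3=\det(\alpha_{ij})$ restricts on $D$ to the determinant of the diagonal matrix with entries $t_1,t_2,t_3$, namely $t_1t_2t_3$, while $x$ restricts non-trivially to $D$ by the previous step; therefore the restriction of $x$ to $M$ is a \emph{non-zero} element of $\rmH^3(M;\bbF_2)^W$.

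The final step is to conclude that this non-zero element equals $\det_3$, and this is where I expect the main difficulty to lie. The cleanest way is to show that $\rmH^3(M;\bbF_2)^W$ --- the degree-three part of the invariants of $W=\GL_3(\bbF_2)\times\GL_3(\bbF_2)$ acting on $\rmH^*(M;\bbF_2)=\bbF_2[\alpha_{ij}]$ --- is one-dimensional, spanned by $\det_3$; granting this, the restriction of $x$, being non-zero, must be $\det_3$. Over a field of characteristic zero this is classical: the only $\GL_n\times\GL_n$-invariant cubic form on $n\times n$ matrices (here $n=3$) is the determinant, by Cauchy's formula for the symmetric powers of a tensor product, equivalently by the first fundamental theorem for $\GL\times\GL$. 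Over $\bbF_2$ one must additionally rule out the modular invariants that can in principle appear in such a low degree, which requires a genuine --- if short --- computation with the $\GL_3(\bbF_2)\times\GL_3(\bbF_2)$-module structure of $\bbF_2[\alpha_{ij}]$ in degree three (for instance by passing to the associated graded of the Frobenius filtration, and by exploiting that the two-sided action on $\rmM_3(\bbF_2)$ has precisely the four rank strata as orbits); this is in effect the invariant theory underlying \cite{Milgram+Priddy}. Should one prefer to avoid that computation, the alternative is to pin the restriction of $x$ down directly by restricting further to well-chosen rank-$3$ elementary abelian subgroups of $M$ and checking that the error monomials $t_1^{i}t_2^{j}t_3^{k}$ with $(i,j,k)\neq(1,1,1)$ are forced to vanish --- but this turns out to need the same module-theoretic input.
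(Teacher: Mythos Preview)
The paper does not give its own proof of this corollary: it simply records that Theorem~\ref{thm:main} resolves Priddy's question, deferring to~\cite[Sec.~6.3]{GKR-W} for the argument that non-vanishing of $\rmH^3(\GL_6(\bbF_2);\bbF_2)$ forces the lifting of~$\det_3$. Your proposal therefore goes well beyond what the paper actually writes, attempting to spell the deduction out.

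Your first steps are correct. Using the first Corollary to identify the generator $x\in\rmH^3(G;\bbF_2)$ by its pairing with~$c^{[3]}$, detecting it on the block-diagonal $(\bbZ/2)^3$ via the isomorphism $\rmH^*(\rmS_3;\bbF_2)\cong\bbF_2[t]$, and conjugating that subgroup onto the diagonal $D\leqslant M$ are all fine, and together they do establish that $x|_M$ is a \emph{non-zero} element of $\rmH^3(M;\bbF_2)^W$. The gap is exactly where you say it is: to conclude $x|_M=\det_3$ you need $\dim\rmH^3(M;\bbF_2)^W=1$, and you do not prove this. Your remarks about why the characteristic-zero Cauchy/first-fundamental-theorem argument does not transfer to~$\bbF_2$, and about what a modular computation would entail, are accurate, but the computation is not carried out; nor does your alternative route (restricting to further rank-three subgroups of~$M$) avoid it, as you yourself acknowledge. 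So the proposal is honest but incomplete at precisely the point that carries the content of the corollary --- the same point the paper delegates to~\cite{GKR-W} rather than proving.
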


It was known before that the first and second Milgram--Priddy classes lift, and it is a consequence of the work of Galatius, Kupers, and Randal-Williams that the third one is the last one that has the chance to do so.


One should think that a singular result like Theorem~\ref{thm:main} can be checked by computer, and this is true in theory, but currently only in theory. In practice, however, the computational complexity of this particular problem does not allow for a brute force approach: the group~$\GL_6(\bbF_2)$ has order about~\hbox{$\sim20\cdot 10^9$}. 

In the following Section~\ref{sec:strategy}, we explain a general strategy that we employ here to reduce our problem to computations that can routinely be done by machine. The computations themselves are explained in Section~\ref{sec:computations}. All cohomology will be with coefficients in the prime field~$\bbF_2$ unless otherwise indicated.


\section{The strategy}\label{sec:strategy}

In this section, we explain the general strategy that we exploit in the following section to show that certain cohomology groups~$\rmH^d(\GL_r(\bbF_2))$ are non-zero.

The general linear group~$\GL(V)$ of a vector space~$V$ acts on the chains of subspaces of~$V$, with the parabolic subgroups as the stabilizer subgroups of this action. The Borel subgroups are the stabilizers of the maximal chains, the flags. The isotropy spectral sequence of the action on the space of these chains, the Tits building, establishes a relation between the~(co)homological invariants of these groups. We will use the following result, which holds in arbitrary characteristic:

\begin{proposition}{\bf (Webb~\cite[Cor.~C]{Webb})}\label{prop:Webb}
Let~$G$ be a finite Chevalley group in defining characteristic~$p$, then
\[
\dim\rmH^d(G)=\sum_{P\geqslant B}(-1)^{\mathrm{rank}(P)}\dim\rmH^d(P)
\]
for all~$d>0$, where the sum is over the proper parabolic subgroups of~$G$ that contain a fixed Borel subgroup~$B$.
\end{proposition}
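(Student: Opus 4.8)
The plan is to assemble $\rmH^{d}(G)$ and all the $\rmH^{d}(P)$ into one homological gadget built from the Tits building $\Delta=\Delta(G)$, and then to read the identity off it. Two classical inputs do the work. First, the Solomon--Tits theorem: $\Delta$ is homotopy equivalent to a wedge of spheres of dimension $\ell-1$, where $\ell$ is the Lie rank of $G$; equivalently $\widetilde{\rmH}_{*}(\Delta;\bbF_p)$ is concentrated in degree $\ell-1$, where it is the Steinberg module $\mathrm{St}$. Second, Steinberg's theorem that $\mathrm{St}$ is a \emph{projective} $\bbF_p G$--module; since $\bbF_p G$ is a symmetric algebra, $\mathrm{St}$ is also injective, so $\rmH^{j}(G;\mathrm{St})=0$ for every $j>0$, and $\mathrm{St}^{G}=0$ as $\mathrm{St}$ is a nontrivial irreducible module; in other words $\rmH^{*}(G;\mathrm{St})=0$.

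The comparison is then set up as follows. The group $G$ acts on $\Delta$ with a single simplex as fundamental domain, the stabiliser of an $i$--simplex being a standard parabolic $P$ (which fixes the simplex pointwise, so no orientation characters intervene), and the types match up so that $\dim\sigma_{P}$ runs over $0,\dots,\ell-1$ as $P$ runs over the proper parabolics, with $\dim\sigma_{B}=\ell-1$ and $\dim\sigma_{P}=0$ for maximal $P$. Writing $C_{i}$ for the $\bbF_p$--module of simplicial $i$--chains, Solomon--Tits and the augmentation give a natural exact sequence of $\bbF_p G$--modules
\[
0 \longrightarrow \mathrm{St} \longrightarrow C_{\ell-1} \longrightarrow \cdots \longrightarrow C_{0} \longrightarrow \bbF_p \longrightarrow 0, \qquad C_{i}=\bigoplus_{\dim\sigma_{P}=i}\bbF_p[G/P].
\]
By Shapiro's lemma $\rmH^{j}(G;C_{i})=\bigoplus_{\dim\sigma_{P}=i}\rmH^{j}(P)$, and one checks $(-1)^{\mathrm{rank}(P)}=(-1)^{\dim\sigma_{P}}$, so the right-hand side of the asserted formula is $\sum_{i}(-1)^{i}\dim\rmH^{d}(G;C_{i})$. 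Hence the proposition is equivalent to the statement that, for each fixed $d>0$, applying $\rmH^{d}(G;-)$ to the truncation $C_{\ell-1}\to\cdots\to C_{0}\to\bbF_p$ yields an \emph{exact} complex of $\bbF_p$--vector spaces.

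To establish that exactness I would run the hypercohomology (isotropy) spectral sequence of the displayed resolution: its $E_{1}$--page is $E_{1}^{i,j}=\rmH^{j}(G;C_{i})=\bigoplus_{P}\rmH^{j}(P)$, the $d_{1}$--differential along a fixed row $j=d$ is precisely the complex above, and the abutment is the hypercohomology of the truncated complex, which vanishes because that complex is quasi-isomorphic to a shift of $\mathrm{St}$ and $\rmH^{*}(G;\mathrm{St})=0$. The point is therefore the collapse of this spectral sequence at $E_{2}$ --- equivalently, the exactness of each row complex for $d>0$, not merely its acyclicity after passage to $E_{\infty}$. This is the crux and the step I expect to be the main obstacle: it is false for a generic $\bbF_p$--acyclic $G$--complex, and even the injectivity of the first differential --- a combination of corestriction maps $\rmH^{d}(B)\to\rmH^{d}(P)$ --- is already a non-formal assertion about the fusion of $G$. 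So the argument must genuinely use both the geometry of $\Delta$ and the projectivity of $\mathrm{St}$. The concrete route I would follow is to restrict the whole diagram to a Sylow $p$--subgroup $U\le B$: this is legitimate because $[G:P]$ is prime to $p$ for every parabolic (each contains the maximal unipotent $U$), so $\rmH^{*}(G)$ and each $\rmH^{*}(P)$ are transfer-split summands of the single ring $\rmH^{*}(U)$, on which all the relevant restrictions are injective; over $\bbF_p U$ the Steinberg module becomes the free module $\bbF_p U$, so its Tate cohomology over $U$ vanishes identically, and this --- together with the existence of a genuinely free $U$--orbit of chambers, that of the chamber opposite to the standard one --- should be enough to force the required collapse, which one then transfers back to $G$. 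Organising this cleanly is exactly the substance of Webb's ``local method'', which I would either reproduce in the present setting or invoke directly.
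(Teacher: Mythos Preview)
The paper does not prove this proposition at all: it is stated with attribution to Webb and used as a black box. There is therefore no ``paper's own proof'' to compare your proposal against; you have supplied far more than the author does.

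As for the substance of your sketch, the architecture is the right one and matches the circle of ideas in Webb's paper: the augmented simplicial chain complex of the Tits building, Solomon--Tits identifying its only homology with the Steinberg module, projectivity (hence injectivity) of~$\mathrm{St}$ over~$\bbF_pG$ forcing~$\rmH^{>0}(G;\mathrm{St})=0$, and Shapiro to rewrite~$\rmH^d(G;C_i)$ as~$\bigoplus_P\rmH^d(P)$. Your identification of the crux is also correct: the vanishing of the abutment of the isotropy spectral sequence gives only that the~$E_\infty$--page is zero, not that each row on~$E_1$ is exact, and it is the row-by-row exactness that is equivalent to the stated identity for each fixed~$d$. (As you implicitly note, for a general exact complex of~$G$--modules the alternating sum~$\sum_i(-1)^i\dim\rmH^d(G;M_i)$ need not vanish, so something specific to this situation must be used.) Your proposed route through the Sylow~$p$--subgroup~$U$ is reasonable and close in spirit to Webb's argument, but the last paragraph effectively ends with ``reproduce or invoke Webb's local method,'' so as a self-contained proof it remains a sketch at the decisive step. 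That said, relative to what the paper does---namely, cite~\cite{Webb}---your proposal is already a careful and accurate expansion of why the citation is justified.
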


This result reduces the computation of the cohomology of~$G=\GL_r(\bbF_2)$ to the computation of the cohomology of its proper parabolic subgroups~$P$, which are much smaller, as we shall see.

Our aim is to show that certain cohomology groups~$\dim\rmH^d(\GL_r(\bbF_2))$ are non-zero. This follows when we can show that their dimension is odd.~(For $d=n$ and~\hbox{$r=2n$} we will then know that this dimension is~$1$, as explained in the introduction.) For that reason, we can ignore the signs in Proposition~\ref{prop:Webb}, even though they are easy to work out.

There is another simplification that we can do. The parabolic subgroups of a general linear group~$\GL(V)$ that contain a fixed Borel subgroup are indexed by the ordered partitions~$\lambda=(\lambda_1,\lambda_2,\dots,\lambda_k)$ of~\hbox{$\lambda_1+\lambda_2+\dots+\lambda_k=\dim(V)$}: the dimensions of the associated graded pieces of the chain that corresponds to the subgroup. We shall write~$\rmP(\lambda)$ for the parabolic subgroup of an ordered partition~$\lambda$. There is an obvious involution
\[
\lambda=(\lambda_1,\lambda_2,\dots,\lambda_k)\mapsto(\lambda_k,\lambda_{k-1},\dots,\lambda_1)=\lambda^\vee
\]
on the set of ordered partitions, and we have an isomorphism~$\rmP(\lambda^\vee)\cong\rmP(\lambda)$ of groups. Therefore, the ordered partitions~$\lambda$ with~$\lambda^\vee\not=\lambda$ come in pairs that do not contribute to the parity of~$\dim\rmH^d(\GL_r(\bbF_2))$. This shows that it is enough to compute~$\dim\rmH^d(\rmP(\lambda))$ for the {\it symmetric} ordered partitions~$\lambda$ of~$r$, those with~\hbox{$\lambda^\vee=\lambda$}, and there are substantially fewer of those. 

In summary, we have:

\begin{proposition}\label{prop:strategy}
\[
\dim\rmH^d(\GL_r(\bbF_2))\equiv\sum_{\lambda}\dim\rmH^d(\rmP(\lambda))\text{\upshape~mod }2,
\]
where the summation is over the proper~($\lambda\not=(r)$ that is) symmetric ordered partitions of~$r$. 
\end{proposition}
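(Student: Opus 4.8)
The plan is to derive Proposition~\ref{prop:strategy} as a direct consequence of Webb's formula (Proposition~\ref{prop:Webb}) combined with the two reductions described just above the statement. The starting point is the equality
\[
\dim\rmH^d(\GL_r(\bbF_2))=\sum_{P\geqslant B}(-1)^{\mathrm{rank}(P)}\dim\rmH^d(P),
\]
valid for $d>0$, where the sum runs over all proper parabolic subgroups $P$ containing a fixed Borel $B$. Since we only care about the parity of the left-hand side, I would first reduce the equality mod~$2$, at which point the signs $(-1)^{\mathrm{rank}(P)}$ disappear and we are left with
\[
\dim\rmH^d(\GL_r(\bbF_2))\equiv\sum_{P\geqslant B}\dim\rmH^d(P)\text{~mod }2.
\]

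Next I would invoke the standard parametrisation of the parabolic subgroups $P\geqslant B$ of $\GL(V)$ by the ordered partitions $\lambda=(\lambda_1,\dots,\lambda_k)$ of $r=\dim V$, writing $P=\rmP(\lambda)$; this is a bijection, and the proper parabolics correspond exactly to the partitions $\lambda\neq(r)$. Rewriting the sum over this index set gives
\[
\dim\rmH^d(\GL_r(\bbF_2))\equiv\sum_{\lambda\neq(r)}\dim\rmH^d(\rmP(\lambda))\text{~mod }2.
\]
Then I would split the index set of proper ordered partitions into the symmetric ones ($\lambda^\vee=\lambda$) and the non-symmetric ones. The reversal involution $\lambda\mapsto\lambda^\vee$ acts on the non-symmetric partitions as a fixed-point-free involution, hence partitions them into two-element orbits $\{\lambda,\lambda^\vee\}$. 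For each such orbit the isomorphism $\rmP(\lambda^\vee)\cong\rmP(\lambda)$ gives $\dim\rmH^d(\rmP(\lambda))=\dim\rmH^d(\rmP(\lambda^\vee))$, so each orbit contributes an even number to the sum and therefore vanishes mod~$2$. What survives is precisely the sum over proper symmetric ordered partitions, which is the asserted congruence.

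Since every ingredient is already granted in the excerpt, there is no genuine obstacle here; the only points requiring a modicum of care are purely bookkeeping: confirming that $\lambda\mapsto\lambda^\vee$ is genuinely fixed-point-free on the non-symmetric partitions (immediate from the definition of ``non-symmetric''), that it really is an involution on that set, and that the proper condition $\lambda\neq(r)$ is preserved by reversal (trivially, since $(r)^\vee=(r)$ is itself symmetric and thus already excluded from the non-symmetric part). One should also note in passing that the group isomorphism $\rmP(\lambda^\vee)\cong\rmP(\lambda)$ can be realised concretely by conjugating with the permutation matrix reversing a chosen basis, which sends $B$ to an opposite Borel and the chain of type $\lambda$ to one of type $\lambda^\vee$; but for the parity argument one only needs the abstract isomorphism, hence the equality of the cohomology dimensions. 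With these remarks the proof is complete.
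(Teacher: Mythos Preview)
Your proposal is correct and follows essentially the same route as the paper: reduce Webb's alternating sum mod~$2$ to kill the signs, then pair off the non-symmetric ordered partitions via the reversal involution and the isomorphism $\rmP(\lambda^\vee)\cong\rmP(\lambda)$ to see that only the symmetric ones survive. The extra bookkeeping you spell out (fixed-point-freeness, preservation of properness, the concrete conjugation realizing the isomorphism) is sound and simply makes explicit what the paper leaves to the reader.
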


We see that the left hand side $\dim\rmH^d(\GL_r(\bbF_2))$ is non-zero once we have shown that the right hand side is odd.


\section{The computations}\label{sec:computations}

In this section, we explain the computations needed as an input for the arguments in the previous section to imply our main results.

\subsection{The second Milgram--Priddy class}\label{sec:2}

Let us warm up by applying the strategy outlined in the previous section to show that the second Milgram--Priddy class in~$\rmH^2(\GL_4(\bbF_2))$ is non-zero. Of course, there is an isomorphism~$\GL_4(\bbF_2)\cong\rmA_8$, and the non-triviality of its second mod~$2$ cohomology is well-known. The point here is to explain our strategy of computation in a known case.

For the group~$\GL_4(\bbF_2)$, there are only three relevant proper parabolic subgroups that contain the Borel subgroup, which is a~$2$--Sylow subgroup: these subgroups correspond to the symmetric ordered partitions~$(1,1,1,1)$,~$(1,2,1)$, and~$(2,2)$ of the~$4$--element set~$\{1,2,3,4\}$. The parabolic subgroup corresponding to the first one is the Borel--Sylow, the parabolic subgroup corresponding to the last one is the Milgram--Priddy subgroup.

We can compute the three relevant cohomology groups using the GAP package HAP~(see~\cite{GAP} and~\cite{HAP}).

\begin{proposition}
The second mod~$2$ cohomology of the proper parabolic subgroups of~$\GL_4(\bbF_2)$ is as follows.
\begin{center}
\begin{tabular}{|c|c|c|}\hline
\text{\upshape symmetric partition}~$\lambda$ & \text{\upshape order of the group}~$\rmP(\lambda)$ &~$\dim\rmH^2(\rmP(\lambda);\bbF_2)$\\ \hline
$(1,1,1,1)$ &~$64=2^6$ &~$7$ \\ 
$(1,2,1)$ &~$192=2^6\cdot 3$ &~$4$ \\ 
$(2,2)$ &~$576=2^6\cdot 9$ &~$4$ \\ \hline
\end{tabular}
\end{center}
\end{proposition}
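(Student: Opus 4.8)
The plan is to compute each of the three numbers $\dim\rmH^2(\rmP(\lambda);\bbF_2)$ directly by machine, since the groups $\rmP(\lambda)$ are small enough: the largest has order $576$. Concretely, I would realise each parabolic $\rmP(\lambda)\leqslant\GL_4(\bbF_2)$ explicitly as a matrix group over $\bbF_2$ — the block upper-triangular matrices with diagonal blocks of sizes given by $\lambda$ — then hand this group to GAP, convert it to a permutation group (e.g.\ via its action on $\bbF_2^4$, giving a faithful permutation representation on $15$ points), and feed it to the HAP routines that build a free resolution of $\bbZ$ over $\bbZ[\rmP(\lambda)]$ up to homological degree $3$. From the resulting chain complex one reads off $\rmH_*(\rmP(\lambda);\bbF_2)$, and by the universal coefficient theorem (everything is over the field $\bbF_2$) one has $\dim\rmH^2(\rmP(\lambda);\bbF_2)=\dim\rmH_2(\rmP(\lambda);\bbF_2)$. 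The three values are then tabulated.

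The next step is the sanity check provided by the theory. By Proposition~\ref{prop:strategy}, applied with $r=4$ and $d=2$, we have
\[
\dim\rmH^2(\GL_4(\bbF_2))\equiv\dim\rmH^2(\rmP(1,1,1,1))+\dim\rmH^2(\rmP(1,2,1))+\dim\rmH^2(\rmP(2,2))\pmod 2,
\]
and the tabulated numbers give $7+4+4=15$, which is odd; this recovers $\rmH^2(\GL_4(\bbF_2))\not=0$, in agreement with the known isomorphism $\GL_4(\bbF_2)\cong\rmA_8$. This is not part of the proof of the proposition itself, but it is the reason the computation is worth recording: it is the warm-up instance of the strategy that will be rerun for $\GL_6(\bbF_2)$ in the next subsection, and getting a known answer out of it validates both the group-theoretic bookkeeping (identifying the symmetric partitions and the corresponding parabolics) and the HAP pipeline.

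The proof is therefore essentially a verification rather than an argument, and the only real obstacle is making sure the computer inputs are correct: that the matrix groups exhibited for $(1,1,1,1)$, $(1,2,1)$, and $(2,2)$ are genuinely the parabolic subgroups containing a common Borel (rather than, say, conjugates that fail to contain a fixed Borel, which would not matter for the isomorphism type but does matter conceptually), and that HAP's resolution has been computed to sufficiently high degree that $\rmH^2$ is stable and correct. For groups of $2$--power order (the Borel, of order $64$) one can cross-check against the cohomology rings of $2$--groups available in the literature or in GAP's small-groups cohomology data; for the two groups with order divisible by $3$ one can independently verify the Poincaré series in low degrees, or recompute with a different resolution length, to be confident that $\dim\rmH^2=4$ in both cases. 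Once these checks pass, the table stands as stated.
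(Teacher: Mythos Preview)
Your proposal is correct and matches the paper's approach exactly: the paper simply states that the three cohomology groups are computed using the GAP package HAP, and your write-up is a more detailed account of that same machine computation together with sensible sanity checks. There is nothing to add or correct.
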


Note that all of these subgroup are considerable smaller than the full general linear group~$\GL_4(\bbF_2)$, which is of order~$20.160=2^6\cdot 315$.
 

We check that the sum~$\pm7\pm4\pm4$ is odd. As explained in the previous section, this implies that~$\rmH^2(\GL_4(\bbF_2))$ is non-zero.


\subsection{The third Milgram--Priddy class}\label{sec:3}

We now complete the proof of our main result, Theorem~\ref{thm:main}. 

In order to show that the third Milgram--Priddy class is non-zero, we follow the same workflow as in Section~\ref{sec:2}. Only the computational complexity increases because of the size of the group~$\GL_6(\bbF_2)$.

Of the 32 ordered partitions of the~$6$--element set~$\{1,2,3,4,5,6\}$, only 8 are symmetric, namely $( 1, 1, 1, 1, 1, 1 )$, $( 1, 1, 2, 1, 1 )$, $( 1, 2, 2, 1 )$, $( 1, 4, 1 )$, $( 2, 1, 1, 2 )$, $( 2, 2, 2 )$, $( 3, 3 )$, and~$( 6 )$.
The parabolic subgroup corresponding to the first one is the Borel, which is also a~$2$--Sylow subgroup of~$\GL_6(\bbF_2)$. 
The parabolic subgroup corresponding to the last one is~$\GL_6(\bbF_2)$ itself and not proper. 


For our strategy to work, we need to know the third mod~$2$ cohomology of the~7 proper parabolic subgroups, and these are small enough so that they can be dealt with by a machine. Again, we used the GAP package HAP for that~(see~\cite{GAP} and~\cite{HAP}). The result is as follows.

\begin{proposition}\label{prop:computations}
The third mod~$2$ cohomology of the proper parabolic subgroups of~$\GL_6(\bbF_2)$ is as follows.
\begin{center}
\begin{tabular}{|c|c|c|}\hline
\text{\upshape symmetric partition}~$\lambda$ & \text{\upshape order of the group}~$\rmP(\lambda)$ &~$\dim\rmH^3(\rmP(\lambda);\bbF_2)$\\ \hline
$(1,1,1,1,1,1)$ &~$32.768 = 2^{15}$ &~$47$ \\ 
$(1, 1, 2, 1, 1)$ &~$98.304 = 2^{15}\cdot 3$ &~$28$ \\
$(1, 2, 2, 1)$ &~$294.912 = 2^{15}\cdot 9$ &~$16$\\
$(1, 4, 1)$ &~$10.321.920 = 2^{15}\cdot 315$ &~$5$ \\
$(2, 1, 1, 2)$ &~$294.912 = 2^{15}\cdot 9$ &~$24$ \\
$(2, 2, 2)$ &~$884.736 = 2^{15}\cdot 27$ &~$17$ \\
$(3, 3)$ &~$14.450.688 = 2^{15}\cdot 441$ &~$6$ \\ \hline
\end{tabular}
\end{center}
\end{proposition}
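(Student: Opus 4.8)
The plan is to verify the seven dimensions in the table by explicit machine computation with the GAP package HAP~\cite{GAP,HAP}, organised so that the two largest parabolic subgroups — both of order roughly $1.4\times10^7$ — remain tractable. First I would fix, for each of the seven symmetric proper ordered partitions $\lambda=(\lambda_1,\dots,\lambda_k)$ of $6$, the concrete realisation of $\rmP(\lambda)$ as the group of invertible block upper-triangular matrices over $\bbF_2$ with diagonal block sizes $\lambda_1,\dots,\lambda_k$, and translate it into a small faithful permutation representation — for instance on the non-zero vectors of $\bbF_2^6$ — that GAP can handle efficiently. At the same time I would record the defining extension
\[
1\longrightarrow U(\lambda)\longrightarrow\rmP(\lambda)\longrightarrow L(\lambda)\longrightarrow1,
\]
in which $U(\lambda)$ is the unipotent radical, a $2$-group that is nilpotent of class at most $k-1$ (in particular elementary abelian when $k\leqslant2$, as for $\lambda=(3,3)$), and $L(\lambda)\cong\prod_i\GL_{\lambda_i}(\bbF_2)$ is the Levi factor, whose factors are drawn from the trivial group $\GL_1(\bbF_2)$, from $\GL_2(\bbF_2)\cong\rmS_3$, from $\GL_3(\bbF_2)$, and from $\GL_4(\bbF_2)\cong\rmA_8$.

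Next I would produce a free resolution of $\bbZ$ over $\bbZ[\rmP(\lambda)]$ out to homological degree $4$. For the Borel $\rmP(1,1,1,1,1,1)$, which is a $2$-group of order $2^{15}$, HAP's dedicated routines for prime-power groups suffice. For the remaining six parabolics I would not treat the group as a black box but build the resolution from the extension above: a resolution for the $2$-group $U(\lambda)$, resolutions for the Levi factors, and HAP's perturbation / twisted-tensor-product machinery for the semidirect product — equivalently, running the Lyndon--Hochschild--Serre spectral sequence $\rmH^p(L(\lambda);\rmH^q(U(\lambda)))\Rightarrow\rmH^{p+q}(\rmP(\lambda))$ out to total degree $3$. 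From the resulting resolution $R_4\to R_3\to R_2\to R_1\to R_0$ one forms the $\bbF_2$-cochain complex $\mathrm{Hom}_{\bbZ\rmP(\lambda)}(R_\bullet,\bbF_2)$, whose term in degree $n$ is an $\bbF_2$-vector space of dimension $\mathrm{rank}(R_n)$ and whose differentials are the mod-$2$ reductions of the boundary matrices, and one reads off $\dim\rmH^3(\rmP(\lambda))=\dim\ker(d^3)-\mathrm{rank}(d^2)$ from two rank computations over $\bbF_2$. Collecting the seven numbers gives the proposition.

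The anticipated bottleneck is the time and memory needed for the two largest groups, $\rmP(1,4,1)$ of order $\approx1.0\times10^7$ with Levi $\GL_4(\bbF_2)\cong\rmA_8$, and $\rmP(3,3)$ of order $\approx1.4\times10^7$ with Levi $\GL_3(\bbF_2)\times\GL_3(\bbF_2)$: a resolution built naively over the full integral group ring is out of reach, which is exactly why exploiting the unipotent-radical / Levi splitting is essential. As a safeguard against computational error I would re-derive each dimension by an independent route — comparing the spectral-sequence output with a direct resolution wherever that is feasible, checking compatibility with restriction to the Borel subgroup, and reusing the $\GL_4(\bbF_2)\cong\rmA_8$ computation already carried out in Section~\ref{sec:2} as a consistency test for that Levi factor.
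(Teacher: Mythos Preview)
Your proposal is sound and would produce the table, but it is considerably more elaborate than what the paper actually does. The paper treats each of the seven parabolic subgroups as a black box and feeds it directly to HAP, stating only that they are ``small enough so that they can be dealt with by a machine''; no use is made of the Levi decomposition, the Lyndon--Hochschild--Serre spectral sequence, or any twisted-tensor-product machinery. Your claim that a naive resolution for $\rmP(1,4,1)$ and $\rmP(3,3)$ is ``out of reach'' is thus more pessimistic than the paper's experience; HAP's built-in routines (which may themselves exploit subgroup structure internally, but not at the user's direction) evidently handle groups of order $\sim10^7$ in degree~$3$. What your approach buys is transparency and robustness: the extension-based computation explains \emph{why} the numbers come out as they do, scales to larger parabolics or higher degrees where a direct attack would genuinely fail, and provides the independent cross-checks you describe. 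What the paper's approach buys is brevity: one line of justification and a table.
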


The table also displays the size of the proper parabolic subgroups, and we see that they are substantially smaller than the group~$\GL_6(\bbF_2)$, which is of order~$20.158.709.760= 2^{15}\cdot 615.195$.


\begin{proof}[Proof of Theorem~\ref{thm:main}]
The sum~$\pm47\pm28\pm16\pm5\pm24\pm17\pm6$ is odd.
Propositions~\ref{prop:strategy} and~\ref{prop:computations} thus imply that~$\dim\rmH^3(\GL_6(\bbF_2))$ has to be odd as well. Theorem~\ref{thm:main} follows.
\end{proof}

A recent paper~\cite{J-FT} by Johnson-Freyd and Treumann contains information about the third homology of some {\it sporadic} finite (simple) groups. 


\section*{Acknowledgements}

The computations reported on here were done in the Summer of 2016; the author apologies for the delay in making them public and thanks for encouragements to do so.



\vfill

\parbox{\linewidth}{%
Markus Szymik\\
Department of Mathematical Sciences\\
NTNU Norwegian University of Science and Technology\\
7491 Trondheim\\
NORWAY\\
\href{mailto:markus.szymik@ntnu.no}{markus.szymik@ntnu.no}\\
\href{https://folk.ntnu.no/markussz}{folk.ntnu.no/markussz}}

\end{document}